\documentclass[12pt,leqno]{amsart}

\usepackage{amsmath,amssymb,amsthm}
\newtheorem{theorem}{Theorem}[section]
\newtheorem{lemma}[theorem]{Lemma}

\newtheorem{proposition}[theorem]{Proposition}
\newtheorem{corollary}[theorem]{Corollary}
\theoremstyle{definition}

\theoremstyle{remark}
\newtheorem{remark}[theorem]{Remark}
\numberwithin{equation}{section}


\def\fnote#1{\footnote}

\def\natu{{\mathbb N}}

\def\ignora#1{}
\def\n3#1{\left\vert  \! \left\vert \! \left\vert \, #1 \, \right\vert \!
  \right\vert \! \right\vert }


\begin{document}

\title{ Weak-star point of continuity property and Schauder bases }

\author{Gin{\'e}s L{\'o}pez-P{\'e}rez and Jos{\'e} A. Soler Arias}
\address{Universidad de Granada, Facultad de Ciencias.
Departamento de An\'{a}lisis Matem\'{a}tico, 18071-Granada
(Spain)} \email{glopezp@ugr.es, jasoler@ugr.es}

\thanks{Partially supported by MEC (Spain) Grant MTM2006-04837 and Junta de Andaluc\'{\i}a Grants FQM-185 and Proyecto
de Excelencia P06-FQM-01438.} \subjclass{46B20, 46B22. Key words:
 Point of continuity property, trees, boundedly complete sequences}
 \maketitle \markboth{G. L\'{o}pez and Jos{\'e} A. Soler   }{
 Weak-star PCP and Schauder bases }

\begin{abstract}
  We characterize  the weak-star point of
continuity property for subspaces of dual spaces with separable
predual and we deduce that the weak-star point of continuity
property is determined by subspaces with a Schauder basis in the
natural setting of dual spaces of separable Banach spaces. As a
consequence of the above characterization we get that a dual space
satisfies the Radon-Nikodym property if, and only if, every
seminormalized topologically weak-star null tree has a boundedly
complete branch, which improves  some results in \cite{DF}
obtained for the separable case. Also, as a consequence of the
above characterization, the following result obtained in \cite{R1}
is deduced: {\it every seminormalized basic sequence in a Banach
space with the point of continuity property has a boundedly
complete subsequence.}

\end{abstract}

\section{Introduction}
\par
\bigskip

We recall (see \cite{bou} for background) that a bounded subset
$C$ of a Banach space satisfies the Radon-Nikodym property (RNP)
if every subset of $C$ is dentable, that is, every subset of $C$
has slices of diameter arbitrarily small. A Banach space is said
to verify the RNP whenever its closed unit ball satisfies the RNP.
It is well known that separable dual spaces have RNP and spaces
with RNP contain many subspaces which are themselves separable
dual spaces. (Note that containing many separable dual subspaces
is equivalent to containing many boundedly complete basic
sequences). As RNP is separably determined, that is, a Banach
space $X$ has RNP whenever every separable subspace of $X$ has
RNP, it seems natural looking for a sequential characterization of
RNP in terms of boundedly complete basic sequences. In \cite{DF}
is proved that the space $B_{\infty}$ (which fails to have RNP)
still has the property: any $w$-null normalized sequence has a
boundedly complete basic subsequence. However, it has been proved
in \cite{DF} that the dual space of a separable Banach space $X$
has RNP if, and only if, every weak-star null tree in the unit
sphere of $X^*$ has some boundedly complete basic branch. It seems
then natural looking for a characterization of RNP for general
dual Banach spaces in terms of boundedly complete basic sequences,
extending the result in \cite{DF} proved for dual of separable
Banach spaces. For this, we introduce the concept of topologically
weak-star null tree, which is a weaker condition than the
weak-star null tree condition, and we characterize in terms of
trees the RNP for weak-star compact subsets of general dual Banach
spaces in proposition \ref{r1}. As a consequence, we get in
theorem \ref{r2} that  a dual Banach space $X$ has RNP if, and
only if, every seminormalized and topologically weak-star null
tree in the unit sphere of $X$ has some boundedly complete branch,
which has as an immediate corollary the aforementioned result in
\cite{DF}.

We recall that a closed and bounded subset of a Banach space $X$
satisfies the point of continuity property (PCP) if every closed
subset of $C$ has some point of weak continuity, that is, the weak
and the norm topologies agree at this point. Also, when $X$ is a
dual space, $C$ is said to satisfy the weak-star point of
continuity property ($w^*$-PCP) if every closed subset of $C$ has
some point of weak-$*$ continuity, equivalently every nonempty
subset of $C$ has relatively $w^*$-open subsets with diameter
arbitrarily small. $X$ has PCP (resp. $w^*$-PCP when $X$ is a dual
space) if $B_X$, the closed unit ball of $X$, has PCP (resp.
$w^*$-PCP). Also, a subspace $X$ of a dual space $Y^*$ is said to
verify the $w^*$-PCP if $B_X$, as a subset of $Y^*$, has the
$w^*$-PCP.   It is well known that RNP implies PCP, being false
the converse, and it is clear that $w^*$-PCP implies PCP.
Moreover, RNP and $w^*$-PCP are equivalent for convex
$w^*$-compact sets in a dual space, see theorem 4.2.13 in
\cite{bou}. We will use this last fact freely in the future. We
refer to \cite{R2} for background about PCP and $w^*$-PCP. It is a
well known open problem \cite{B} if PCP (resp. RNP) is determined
by subspaces with a Schauder basis. Our goal is characterize
$w^*$-PCP for closed and bounded subsets of dual spaces of
separable Banach spaces and conclude in theorem \ref{fin} that, in
fact, $w^*$-PCP is determined by subspaces with a Schauder basis
in the  natural setting of subspaces of dual spaces with a
separable predual. As an easy consequence we also deduce from the
above characterization of $w^*$-PCP that every seminormalized
basic sequence in a Banach space with PCP has a boundedly complete
basic subsequence. This last result was obtained in \cite{R1}.

We begin with some notation and preliminaries. Let $X$ be a Banach
space and let $B_X$, respectively $S_X$, be the closed unit ball,
respectively sphere, of $X$. The weak-star topology in $X$, when
it is a dual space, will be denoted by $w^*$. If $A$ is a subset
in $X$, $\overline{A}^{w^*}$ stands for the weak-star closure of
$A$ in $X$.  Given $\{e_n\}$ a basic sequence in $X$, $\{e_n\}$ is
said to be {\it semi-normalized} if $0<\inf_n\Vert e_n\Vert\leq
\sup_n\Vert e_n\Vert <\infty$ and the closed linear span of
$\{e_n\}$ is denoted by $[e_n]$. $\{e_n\}$ is called {\it
boundedly complete} provided whenever scalars $\{\lambda_i\}$
satisfy $\sup_n\Vert\sum_{i=1}^n\lambda_ie_i\Vert<\infty$, then
$\sum_n\lambda_ne_n$ converges. $\{e_n\}$ is called {\it
shrinking} if $[e_n]^*=[e_n^*]$, where $\{e_n^*\}$ denotes the
sequence of biorthogonal functionals associated to $\{e_n\}$.

A boundedly complete basic sequence $\{e_n\}$ in a Banach space
$X$ spans a dual space. In fact, $[e_n^*]^*=[e_n]$, where
$\{e_n^*\}$ denotes the sequence of biorthogonal functionals in
the dual space $X^*$ \cite {LZ}. Following the notation in
\cite{S}, it is said that a sequence $\{e_n\}$ in a Banach space
is {\it type P} if the set $\{\sum_{k=1}^ne_k:n\in \natu\}$ is
bounded. Observe, from the definitions, that  type P
seminormalized basic sequences fail to be always boundedly
complete basic sequences.

A   sequence $\{x_n\}$ in a Banach space $X$ is said to be {\it
strongly summing} if whenever $\{\lambda_n\}$ is a sequence of
scalars with $\sup_n\Vert\sum_{k=1}^n\lambda_kx_k\Vert<\infty$ one
has that the series of scalars $\sum_n\lambda_n$ converges. The
remarkable $c_0$-theorem \cite{R3} assures that every weak-Cauchy
and not weakly convergent sequence in a Banach space not
containing subspaces isomorphic to $c_0$ has a strongly summing
basic subsequence.

$\natu^{<\omega}$ stands for the set of all ordered finite
sequences of natural numbers joint to the empty sequence denoted
by $\emptyset$. We consider the natural order in
$\natu^{<\omega}$, that is, given
$\alpha=(\alpha_1,\ldots,\alpha_p),\
\beta=(\beta_1,\ldots,\beta_q)\in \natu^{<\omega}$, one has
$\alpha\leq \beta$ if $p\leq q$ and $\alpha_i=\beta_i$ $\forall
1\leq i\leq p$. If $\alpha=(\alpha_1,\ldots,\alpha_p)\in
\natu^{<\omega}$ we do $\alpha -=(\alpha_1,\ldots,\alpha_{p-1})$.
Also $\vert \alpha\vert$ denotes the {\it length} of sequence
$\alpha$, and $\emptyset$ is the minimum of $\natu^{<\omega}$ with
this partial order. A {\it tree} in a Banach space $X$ is a family
$\{x_A\}_{A\in \natu^{<\omega}}$ of vectors in $X$ indexed on
$\natu^{<\omega}$. The tree will be said {\it seminormalized} if
$0<\inf_A\Vert x_A\Vert\leq \sup_A\Vert x_A\Vert <\infty$. We will
say that the tree $\{x_A\}_{A\in \natu^{<\omega}}$ is {\it
$w^*$-null}, when $X$ is a dual space, if the sequence
$\{x_{(A,n)}\}_n$ is $w^*$-null for every $A\in\natu^{<\omega}$.
The tree $\{x_A\}_{A\in \natu^{<\omega}}$ is {\it topologically
$w^*$-null} if $0\in \overline{\{x_{(A,n)}:n\in \natu\}}^{w^*}$
for every $A\in\natu^{<\omega}$. A sequence $\{x_{A_n}\}_{n\geq
0}$ is called a {\it branch} if $\{A_n\}$ is a maximal totally
ordered subset of $\natu^{<\omega}$, that is, there exists a
sequence $\{\alpha_n\}$ of natural numbers such that
$A_n=(\alpha_1,\ldots,\alpha_n)$ for every $n\in \natu$ and
$A_0=\emptyset$. Given a tree $\{x_A\}_{A\in\natu^{<\omega}}$ in a
Banach space, a {\it full subtree} is a new tree
$\{y_A\}_{A\in\natu^{<\omega}}$ defined by
$y_{\emptyset}=x_{\emptyset}$ and $y_{(A,n)}=x_{(A,\sigma_A(n))}$
for every $A\in \natu^{<\omega}$ and for every $n\in \natu$, where
for every $A\in \natu^{<\omega}$, $\sigma_A$ is a strictly
increasing map, equivalently when every branch of $\{y_A\}$ is
also a branch of $\{x_A\}$. The tree $\{x_A\}_{A\in
\natu^{<\omega}}$ is said to be {\it uniformly type P} if every
branch of the  tree is type P and the partial sums of every branch
are uniformly bounded. The tree $\{x_A\}_{A\in \natu^{<\omega}}$
is said to be {\it basic} if the countable set $\{x_A:\ A\in
\natu^{\omega}\}$ is a basic sequence for some rearrangement.

Whenever $\{x_n\}$ is a sequence in a Banach space $X$, we will
see this sequence also like a tree doing $x_A=x_{\max(A)}$ for
every $A\in\natu^{<\omega}$. Furthermore the branches of this tree
are the subsequences of the sequence $\{x_n\}$.

Finally, we recall that a {\it boundedly complete skipped blocking
finite dimensional decomposition} (BCSBFDD) in a separable Banach
space $X$ is a sequence $\{F_j\}$ of finite dimensional subspaces
in $X$ such that:\begin{enumerate} \item $X=[F_j:j\in
\natu]$.\item $F_k\cap[F_j:j\neq k]=\{0\}$ for every $k\in
\natu$.\item For every sequence $\{n_j\}$ of non-negative integers
with $n_j+1<n_{j+1}$ for all $j\in \natu$ and for every $f\in
[F_{(n_j,n_{j+1})}:j\in \natu]$ there exists a unique sequence
$\{f_j\}$ with $f_j\in F_{(n_j,n_{j+1})}$ for all $j\in \natu$
such that $f=\sum_{j=1}^{\infty}f_j$.\item Whenever $f_j\in
F_{(n_j,n_{j+1})}$ for all $j\in \natu$ and $\sup_n\Vert
\sum_{j=1}^{n}f_j\Vert<\infty$ then $\sum_{j=1}^{\infty}f_j$
converges.\end{enumerate} If $X$ is a subspace of $Y^*$ for some
$Y$, a BCSBFDD $\{F_j\}$ in $X$ will be called $w^*$-continuous if
$F_i\cap \overline{[F_j:j\neq i]}^{w^*}= \{0\}$ for every $i$.
Here, $[A]$ denotes the closed linear span in $X$ of the set $A$
and, for some nonempty interval of non-negative integers $I$, we
denote the linear span of the $F_j'$s for $j\in I$ by $F_{I}$.

If $\{F_j\}$ is a BCSBFDD in a separable Banach space $X$ and
$\{x_j\}$ is a sequence in $X$ such that $x_j\in
F_{(n_j,n_{j+1})}$ for some sequence $\{n_j\}$ of non-negative
integers with $n_j+1<n_{j+1}$ for all $j\in \natu$, we say that
$\{x_j\}$ is a {\it skipped block sequence} of $\{F_n\}$. It is
standard to prove that there is a positive constant $K$ such that
every skipped block sequence $\{x_j\}$ of $\{F_n\}$ with $x_j\neq
0$ for every $j$ is a boundedly complete basic sequence with
constant at most $K$.

From \cite{GM}, we know that the family of separable Banach spaces
with PCP is exactly the family of separable Banach spaces with a
BCSBFDD.

\section{Main results}
\par
\bigskip

We begin with a characterization of RNP for $w^*$-compact of
general dual spaces. This result can be seen like a $w^*$-version
of results in \cite{LS}.

\begin{proposition}\label{r1} Let $X$ be a Banach space and let $K$ be a weak-star compact and convex subset of $X^*$.
 Then the following
assertions are equivalent:\begin{enumerate} \item[i)] $K$ fails
RNP. \item[ii)] There is a seminormalized topologically weak-star
null tree $\{x_A\}_{A\in\natu^{<\omega}}$ in $X^*$ such that
$\{\sum_{C\leq A}x_C:A\in\natu^{<\omega}\}\subset
K$.\end{enumerate}\end{proposition}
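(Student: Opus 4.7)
Proof plan.

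\textbf{Direction (ii) $\Rightarrow$ (i).} Let $\{x_A\}$ be a tree as in (ii), and put $s_A := \sum_{C \le A} x_C \in K$, $L := \overline{\{s_A : A \in \natu^{<\omega}\}}^{w^*}$, and $\varepsilon := \inf_A \|x_A\| > 0$. Since $K$ is $w^*$-closed, $L \subset K$. I will show that every nonempty relatively $w^*$-open subset $U$ of $L$ has norm-diameter at least $\varepsilon$, which exhibits failure of $w^*$-PCP. By $w^*$-density of $\{s_A\}$ in $L$, some $s_A$ belongs to $U$; translating the hypothesis $0\in\overline{\{x_{(A,n)}:n\in\natu\}}^{w^*}$ by $s_A$ gives $s_A\in\overline{\{s_{(A,n)}:n\in\natu\}}^{w^*}$, so $U$ meets $\{s_{(A,n)}\}_n$, say at $s_{(A,n_0)}$, and $\|s_{(A,n_0)}-s_A\|=\|x_{(A,n_0)}\|\ge\varepsilon$. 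Hence $L$ witnesses the failure of $w^*$-PCP in $K$, and since $K$ is convex and $w^*$-compact, Theorem 4.2.13 of \cite{bou} gives failure of RNP.

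\textbf{Direction (i) $\Rightarrow$ (ii).} By the same equivalence, failure of RNP in $K$ yields $\varepsilon>0$ and a nonempty $w^*$-closed $L\subset K$ each of whose nonempty relatively $w^*$-open subsets has norm-diameter greater than $2\varepsilon$. I build the tree recursively by producing its partial sums $s_A\in L$. Choose $s_\emptyset\in L$ with $\|s_\emptyset\|>0$ (possible because $L$ contains two points of mutual distance exceeding $2\varepsilon$, whence at least one of norm exceeding $\varepsilon$), and set $x_\emptyset:=s_\emptyset$. Given $s_A\in L$, the key observation is that $s_A\in\overline{L\setminus B(s_A,\varepsilon)}^{w^*}$: otherwise some relatively $w^*$-open $U\ni s_A$ in $L$ would satisfy $U\subset B(s_A,\varepsilon)$, giving $\mathrm{diam}(U)\le 2\varepsilon$ against the choice of $L$. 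Extract a sequence $\{s_{(A,n)}\}_n\subset L\setminus B(s_A,\varepsilon)$ with $s_A\in\overline{\{s_{(A,n)}:n\in\natu\}}^{w^*}$, and set $x_{(A,n)}:=s_{(A,n)}-s_A$. A telescoping check yields $\sum_{C\le A}x_C=s_A\in L\subset K$; the bounds $\min(\|s_\emptyset\|,\varepsilon)\le\|x_A\|\le 2\sup_{y\in K}\|y\|$ make the tree seminormalized; and translating $s_A\in\overline{\{s_{(A,n)}:n\}}^{w^*}$ by $-s_A$ gives the topologically $w^*$-null condition.

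\textbf{Main obstacle.} The delicate point is the extraction, at each node, of a \emph{countable} subset $\{s_{(A,n)}\}$ of $L\setminus B(s_A,\varepsilon)$ whose $w^*$-closure still contains $s_A$. When the predual is separable, $(K,w^*)$ is metrizable on bounded sets and this is a diagonal argument; in the general case the $w^*$-closure of a set need not coincide with its sequential closure, so some care is needed (e.g., reducing to a separable sub-situation or selecting the sequence simultaneously with a suitable countable family of basic $w^*$-neighborhoods adapted to the tree under construction). Once this extraction is secured, the remaining verifications are routine bookkeeping.
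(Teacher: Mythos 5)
Your direction (ii)$\Rightarrow$(i) is correct and is essentially the paper's argument (partial sums of the tree give a subset of $K$ all of whose nonempty relatively $w^*$-open subsets have diameter bounded below, hence $K$ fails $w^*$-PCP and, being convex and $w^*$-compact, fails RNP). The problem is (i)$\Rightarrow$(ii): the step you yourself label the ``main obstacle'' is precisely the mathematical content of the proposition in the general (nonseparable predual) setting, and your proposal does not close it. When $X$ is nonseparable, a point in the $w^*$-closure of a set need not lie in the $w^*$-closure of any countable subset of it (for instance, in $B_{\ell_\infty(\omega_1)}=B_{\ell_1(\omega_1)^*}$ the constant-one function lies in the $w^*$-closure of the finitely supported $0$--$1$ functions, but not in the $w^*$-closure of any countable subfamily), so from $s_A\in\overline{L\setminus B(s_A,\varepsilon)}^{w^*}$ you cannot, in general, extract a sequence $\{s_{(A,n)}\}_n\subset L\setminus B(s_A,\varepsilon)$ with $s_A\in\overline{\{s_{(A,n)}:n\in\natu\}}^{w^*}$. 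Moreover, your second suggested remedy (selecting the successors against a countable family of basic $w^*$-neighborhoods chosen along the construction) cannot work as stated: the topologically $w^*$-null condition demands $0\in\overline{\{x_{(A,n)}:n\in\natu\}}^{w^*}$ with respect to \emph{all} $w^*$-neighborhoods of $0$, not a pre-selected countable family.

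The missing idea is the separable reduction the paper leads with, i.e.\ the fact that RNP is separably determined: if $K$ fails RNP then, by Theorem 2.3.6 of \cite{bou}, $K$ contains a countable non-dentable subset $D$; the set $\overline{co}^{w^*}(D)$ is then a $w^*$-compact, convex and $w^*$-separable subset of $K$ failing RNP, hence failing $w^*$-PCP by Theorem 4.2.13 of \cite{bou}. The bad set $B$ (every nonempty relatively $w^*$-open subset of diameter $>2\delta$) is then found inside this $w^*$-separable environment, and it is this relative $w^*$-separability that legitimizes, at each node, the choice of a countable set $C_b\subset B\setminus B(b,\delta)$ with $b\in\overline{C_b}^{w^*}$; after that, the recursion and the passage to differences $x_{(A,i)}=y_{(A,i)}-y_A$ is exactly the bookkeeping you describe. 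So your first suggested remedy (``reducing to a separable sub-situation'') is the right one, but carrying it out requires the countable non-dentable subset theorem together with the RNP/$w^*$-PCP equivalence for $w^*$-compact convex sets; without that ingredient your proof of (i)$\Rightarrow$(ii) only covers the case of a separable predual and is incomplete as a proof of the proposition.
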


\begin{proof} i)$\Rightarrow$ ii) Assume that $K$ fails RNP. Then,
from theorem 2.3.6 in \cite{bou}
 there is $D$ a non-dentable and countable subset of $K$. Now
$\overline{co}^{w^*}(D)$ is a weak-star compact and weak-star
separable subset of $K$ failing $w^*$-PCP. So there is $B$ a
relatively weak-star separable subset of $\overline{co}^{w^*}(D)$
and $\delta>0$ such that every relatively weak-star open subset of
$B$ has diameter greater than $2\delta$. So $b\in
\overline{B\setminus B(b,\delta)}^{w^*}$ for every $b\in B$, where
$B(b,\delta)$ stands for the open ball with center $b$ and radius
$\delta$. Note then that, since $B$ is relatively weak-star
separable, for every $b\in B$ there is a countable subset $C_b\in
B\setminus B(b,\delta)$ such that $b\in \overline{C_b}^{w^*}$.

First, we construct a tree $\{y_A\}_{A\in \natu^{<\omega}}$ in $B$
satisfying:\begin{enumerate}\item[a)]$y_{A}\in\overline{B\setminus
B(y_A,\delta)}^{^*}w$ for every $A\in\natu^{<\omega}$.\item[b)]
$\Vert y_A-y_{(A,i)}\Vert>\delta$ for every
$A\in\natu^{<\omega}$.\item[c)] $y_A\in
\overline{\{y_{(A,i)}:i\in\natu\}}^{w^*}$ for every
$A\in\natu^{<\omega}$.  \end{enumerate} Pick $y_{\emptyset}\in B
$. As $y_{\emptyset}\in \overline{B\setminus
B(y_{\emptyset},\delta)}^{w^*}$ then there is a countable set
$C_{y_{\emptyset}}=\{y_(i):i\in \natu\}\subset B\setminus
B(y_{\empty},\delta)$ such that
$y_{\emptyset}\in\overline{C_{y_{\empty}}}^{w^*}$.   Then a), b)
and c) are verified. By iterating this process we construct the
tree $\{y_A\}_{A\in \natu^{<\omega}}$ satisfying a), b) and c).

Now we define a new tree $\{x_A\}_{A\in\natu^{<\omega}}$ by
$x_{\emptyset}=y_{\emptyset}$ and $x_{(A,i)}=y_{(A,i)}-y_A$ for
every $i\in \natu$ and $A\in \natu^{<\omega}$. From b) we get that
$\{x_A\}_{A\in\natu^{<\omega}}$ is a seminormalized tree, since
$B$ is bounded. From c), we deduce that
$\{x_A\}_{A\in\natu^{<\omega}}$ is  topologically weak-star null.
Furthermore, if $A\in \natu^{<\omega}$ then $\sum_{C\leq
A}x_C=y_A$, from the definition of the tree
$\{x_A\}_{A\in\natu^{<\omega}}$. So
$\{x_A\}_{A\in\natu^{<\omega}}$ is a uniformly type P tree, since
$B$ is bounded and $y_A\in B$ for every $A\in \natu^{<\omega}$.
This finishes the proof of i)$\Rightarrow$ii).

ii)$\Rightarrow$i) Let $\{x_A\}$ be a seminormalized topologically
weak-star null tree such that $B=\{\sum_{C\leq
A}x_C:A\in\natu^{<\omega}\}\subset K$ and let $\delta>0$ such that
$\Vert x_A\Vert>\delta$ for every $A\in \natu^{<\omega}$. For
every $A\in \natu^{<\omega}$ and for every $n\in \natu$ we have
that $\sum_{C\leq (A,n)}x_C=\sum_{C\leq A}x_C+x_{(A,n)}$, but
$0\in\overline{\{x_{(A,n)}N\in \natu\}}^{w^*}$, since the tree
$\{x_A\}$ is topologically weak-star null. So $\sum_{C\leq
A}x_C\in\overline{\{\sum_{C\leq (A,n)}x_C:n\in\natu\}}^{w^*}$ and
$\Vert\sum_{C\leq (A,n)}x_C-\sum_{C\leq A}x_C\Vert>\delta$. This
proves that $B$ has no points where the identity map is continuous
from the weak-star to the norm topologies. In fact, we have proved
that every relatively weak-star open subset of $B$ has diameter
grater than $\delta$. Now, $\overline{B}^{\Vert\cdot\Vert}$ is a
closed and bounded subset of $K$ such that every relatively
weak-star open subset of $\overline{B}^{\Vert\cdot\Vert}$ has
diameter grater than $\delta$, and so $K$ fails $w^*$-PCP. As $K$
is $w^*$-compact, then $K$ fails RNP.\end{proof}

Essentially, the fact that RNP is separably determined has allowed
us to get the above result in the setting of general dual spaces.
The next theorem characterizes the $w^*$-PCP for subsets of dual
spaces with a separable predual in terms of $w^*$-null trees,
since in this case the $w^*$ topology is metrizable on bounded
sets. It seems natural then thinking that a characterization of
$w^*$-PCP for subsets in general dual spaces in terms of
topologically $w^*$-null trees has to be true, however we don't
know if $w^*$-PCP is separable determined in general. This is the
difference between the above proposition  and the next one, which
is obtained now easily.

\begin{proposition}\label{p1} Let $X$ be a separable Banach space and let $K$ be a closed and bounded subset  of $X^*$.
 Then the following
assertions are equivalent:\begin{enumerate} \item[i)] $K$ fails
$w^*$-PCP. \item[ii)] There is a seminormalized weak-star null
tree $\{x_A\}_{A\in\natu^{<\omega}}$ in $X^*$ such that
$\{\sum_{C\leq A}x_C:A\in\natu^{<\omega}\}\subset
K$.\end{enumerate}\end{proposition}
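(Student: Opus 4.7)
The approach is to mimic the proof of Proposition \ref{r1}, using the separability of $X$ to upgrade ``topologically $w^*$-null'' to ``$w^*$-null.'' The key enabling fact is that since $X$ is separable and $K$ is bounded, the weak-star topology on $K$ (and on every bounded subset of $X^*$) is metrizable, so whenever a point lies in the $w^*$-closure of a set, it is the $w^*$-limit of an actual sequence from that set. Thus the only substantive difference from Proposition \ref{r1} is that at each step of the tree construction we can extract a $w^*$-convergent sequence rather than merely a set with the node in its $w^*$-closure.

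For the direction ii) $\Rightarrow$ i), I would argue just as in the ii) $\Rightarrow$ i) half of Proposition \ref{r1}. Setting $B = \{\sum_{C \leq A} x_C : A\in\natu^{<\omega}\} \subseteq K$ and $\delta = \inf_{A}\|x_A\| > 0$, the identity $\sum_{C\leq(A,n)} x_C = \sum_{C\leq A} x_C + x_{(A,n)}$ combined with $x_{(A,n)} \xrightarrow{w^*} 0$ shows that every point of $B$ is a $w^*$-limit of points at norm distance $> \delta$. Hence every relatively $w^*$-open subset of $B$ has diameter at least $\delta$, and so the subset $B \subseteq K$ directly witnesses the failure of $w^*$-PCP for $K$. (Note that unlike in Proposition \ref{r1}, we do not need $K$ to be $w^*$-compact or convex, since $w^*$-PCP is defined for any closed bounded set.)

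For i) $\Rightarrow$ ii), the failure of $w^*$-PCP for $K$ yields a nonempty subset $B \subseteq K$ and a $\delta > 0$ such that every relatively $w^*$-open subset of $B$ has diameter $> 2\delta$; equivalently, $b \in \overline{B \setminus B(b,\delta)}^{w^*}$ for every $b \in B$. I would then inductively construct a tree $\{y_A\}_{A \in \natu^{<\omega}} \subseteq B$ exactly as in Proposition \ref{r1}, except that at each node I invoke metrizability of the $w^*$-topology on the bounded set $B \setminus B(y_A,\delta)$ to pick a sequence $\{y_{(A,n)}\}_n$ in $B \setminus B(y_A,\delta)$ that actually $w^*$-converges to $y_A$. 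Setting $x_\emptyset = y_\emptyset$ and $x_{(A,i)} = y_{(A,i)} - y_A$ then gives a tree with $\sum_{C\leq A} x_C = y_A \in B \subseteq K$; it is seminormalized because $\|x_{(A,i)}\| > \delta$ from below and $\|x_{(A,i)}\| \leq 2\sup_{k\in K}\|k\|$ from above; and it is genuinely $w^*$-null (not just topologically so) since $y_{(A,n)} \xrightarrow{w^*} y_A$ means $x_{(A,n)} \xrightarrow{w^*} 0$ for each $A$.

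The step requiring a bit of care is this last sequence selection in i) $\Rightarrow$ ii): in Proposition \ref{r1} one only obtained $y_A \in \overline{\{y_{(A,n)} : n\in\natu\}}^{w^*}$ via a countable set, whereas here metrizability pins this down to an honest weak-star limit. That is the sole place where the separability hypothesis on $X$ is used, and it is what makes the final tree $w^*$-null on the nose rather than merely topologically $w^*$-null, explaining why Proposition \ref{p1} strengthens Proposition \ref{r1} precisely under the separable-predual hypothesis.
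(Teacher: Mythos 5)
Your proposal is correct and follows essentially the same route as the paper: both directions repeat the construction of Proposition \ref{r1}, with the separability of $X$ used exactly where you use it, namely to invoke metrizability of the $w^*$-topology on bounded sets so that $y_A\in\overline{B\setminus B(y_A,\delta)}^{w^*}$ can be upgraded to an actual $w^*$-convergent sequence, making the difference tree $\{x_A\}$ genuinely $w^*$-null. The ii)$\Rightarrow$i) argument you give is likewise the same as the paper's, so no further comment is needed.
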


\begin{proof}
i)$\Rightarrow$ii) If $K$ fails $w^*$-PCP there is $B$ a subset of
$K$ and $\delta>0$ such that every relatively weak-star open
subset of $B$ has diameter greater than $2\delta$. So $b\in
\overline{B\setminus B(b,\delta)}^{w^*}$ for every $b\in B$, where
$B(b,\delta)$ stands for the open ball with center $b$ and radius
$\delta$. Note then that, since $X$ is separable the
$w^*$-topology in $X^*$ is metrizable on bounded sets, and so for
every $b\in B$ there is a countable subset $C_b\in B\setminus
B(b,\delta)$ such that $b\in \overline{C_b}^{w^*}$. Hence we can
assume that $C_b$ is a sequence $w^*$ converging to $b$. Now we
can construct, exactly like in the proof of i)$\Rightarrow$ii) of
the above proposition, the desired $w^*$-null tree satisfying ii).

ii)$\Rightarrow$i) If one assumes ii) we can repeat the proof of
  ii)$\Rightarrow$ i) in the above proposition  to get that $K$ fails
  $w^*$-PCP.\end{proof}

\begin{remark}\label{remar} If $X$ is a separable subspace of a dual space $Y^*$ with
$X$ satisfying the $w^*$-PCP, it is shown in \cite{R2} (see (1)
implies (8) of theorem 2.4 joint the comments in page 276) that
there is a separable subspace $Z$ of $Y$ such that $X$ is
isometric to a subspace of $Z^*$ and $X$ has $w^*$-PCP, as
subspace of $Z^*$. Then, in order to study the $w^*$-PCP of a
subspace of $Y^*$, it is more natural assume that $Y$ is
separable.\end{remark}

We show now our characterization of $w^*$-PCP in terms of
boundedly complete basic sequences in a general setting. A similar
characterization for PCP can be seen in \cite{LS}, but the proof
of the following result uses strongly the concept of
$w^*$-continuous boundedly complete skipped blocking finite
dimensional decomposition and assumes separability in the predual
space.

\begin{theorem}\label{p2} Let $X$, $Y$ be  Banach spaces with $Y$ separable and $X$ a subspace of $Y^*$.
 Then the following
assertions are equivalent:\begin{enumerate} \item[i)] $X$ has
$w^*$-PCP. \item[ii)] Every  weak-star null tree in $S_{X}$ is not
uniformly type P. \item[iii)] Every   weak-star null tree in
$S_{X}$ has not type P branches. \item[iv)] Every weak-star null
tree in $S_{X}$ has a boundedly complete
branch.\end{enumerate}\end{theorem}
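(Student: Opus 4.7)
I would prove the theorem via the implications $(i) \Rightarrow (iv) \Rightarrow (ii) \Rightarrow (i)$ together with $(iii) \Rightarrow (ii)$ and a separate derivation of $(iv) \Rightarrow (iii)$. The heart of the proof is $(i) \Rightarrow (iv)$, which rests on the $w^*$-continuous BCSBFDD machinery, adapted from \cite{GM} to the dual setting.

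The implications not involving BCSBFDD are comparatively short. For $(iv) \Rightarrow (ii)$: if some weak-star null tree $\{x_A\}$ in $S_X$ were uniformly type $P$, $(iv)$ would supply a boundedly complete branch $\{x_{A_n}\}$ whose partial sums are uniformly bounded; then bounded completeness would force $\sum_n x_{A_n}$ to converge in norm, yielding $x_{A_n} \to 0$, which contradicts $\Vert x_{A_n}\Vert = 1$. The implication $(iii) \Rightarrow (ii)$ is immediate, since a uniformly type $P$ tree has, in particular, a type $P$ branch. For $(ii) \Rightarrow (i)$ I argue contrapositively: if $X$ fails $w^*$-PCP, Proposition~\ref{p1} applied to $B_X$ as a closed bounded subset of $Y^*$ produces a seminormalized weak-star null tree $\{x_A\} \subset X$ whose partial sums lie in $B_X$; passing to a full subtree on which the norms $\Vert x_A\Vert$ cluster at a single value $c \in (0, 2]$ (possible by $w^*$-metrizability on bounded subsets of $Y^*$, which follows from separability of $Y$) and rescaling by $1/c$ then yields a uniformly type $P$ weak-star null tree in $S_X$, contradicting $(ii)$.

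For $(iv) \Rightarrow (iii)$ I argue as follows. Suppose $\{x_A\}$ is a weak-star null tree in $S_X$ with a type $P$ branch $\{x_{A_n}\}$, so $s_n = \sum_{k=1}^n x_{A_k}$ is bounded by some $M$. Since $\Vert x_{A_n}\Vert = 1$ the sequence $\{s_n\}$ cannot be norm-Cauchy, and on bounded subsets of $Y^*$ the $w^*$-topology is metrizable. I use these facts to choose indices $n_A \in \natu$ (for $A \in \natu^{<\omega}$) recursively and set $z_\emptyset = s_{n_\emptyset}$, $z_A = s_{n_A} - s_{n_{A-}}$ for $A \neq \emptyset$; the choice is made so that $\Vert z_A\Vert \geq \epsilon > 0$ (using the non-Cauchy behavior of $\{s_n\}$) and so that $s_{n_{(A, j)}} \to s_{n_A}$ in $w^*$ as $j \to \infty$ (so that the children $z_{(A, j)}$ are $w^*$-null). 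Then $\{z_A\}$ is a seminormalized weak-star null tree in $X$ whose branch partial sums telescope to $s_{n_{A_m}}$, uniformly bounded by $M$, hence uniformly type $P$. After a subtree-and-rescaling step to reach $S_X$, the already-established $(iv) \Rightarrow (ii)$ supplies the contradiction.

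The core implication $(i) \Rightarrow (iv)$ goes as follows. By Remark~\ref{remar} I may assume $Y$ separable, so $X$ is a separable subspace of $Y^*$ with $w^*$-PCP. The first step, and the main technical obstacle, is to equip $X$ with a $w^*$-continuous BCSBFDD $\{F_j\}$; this is the dual analogue of the Ghoussoub--Maurey characterization of separable PCP spaces, with the extra requirement that the decomposition's projections be $w^*$-continuous. Given a weak-star null tree $\{x_A\}$ in $S_X$, I then recursively select a branch $\{x_{A_n}\}$ which, up to a summable perturbation, is a skipped block sequence of $\{F_j\}$: at stage $n$, having chosen $A_{n-1}$ and a current block index $k_n$, the $w^*$-continuity of the projection onto $F_{[0, k_n]}$ combined with the weak-star null behavior of $\{x_{(A_{n-1}, m)}\}_m$ lets me pick $\alpha_n$ so that the ``front'' component of $x_{(A_{n-1}, \alpha_n)}$ has norm less than $\epsilon_n$; then $k_{n+1} > k_n + 1$ is chosen large enough to concentrate the ``tail'' in $F_{(k_n, k_{n+1})}$. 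With $\sum_n \epsilon_n$ small, a standard perturbation argument shows $\{x_{A_n}\}$ is equivalent to a skipped block sequence and hence inherits bounded completeness.
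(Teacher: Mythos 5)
The serious problem is your treatment of (iii). You read (iii) as ``every $w^*$-null tree in $S_X$ has \emph{no} type P branch'' and try to prove (iv)$\Rightarrow$(iii) in that strong sense; but in that sense (iii) is simply false, so no construction can work. Take $Y=X=\ell_2$ (so $X$ has $w^*$-PCP and, by (i)$\Rightarrow$(iv), every weakly null tree in $S_{\ell_2}$ has a boundedly complete branch) and build a tree in $S_{\ell_2}$ whose children sequences consist of one ``designated'' vector followed by far-out basis vectors $e_m$: arranging the designated vectors so that one branch is $e_1,-e_1,e_2,-e_2,\dots$, every children sequence is weakly null, yet that branch has partial sums bounded by $1$, i.e. is type P. This also explains exactly where your construction breaks: the step ``choose the indices so that $s_{n_{(A,j)}}\to s_{n_A}$ in $w^*$ while $\Vert s_{n_{(A,j)}}-s_{n_A}\Vert\geq\epsilon$'' is not achievable in general, since the $w^*$-cluster points of the partial sums $\{s_m\}$ need not contain any partial sum from which the later $s_m$ stay $\epsilon$-separated (in the example the only cluster point is $0$). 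The reading the paper intends for (iii) is the weaker one: every $w^*$-null tree in $S_X$ has \emph{some} branch that is not type P. With that reading, (iv)$\Rightarrow$(iii) is exactly your own (iv)$\Rightarrow$(ii) observation applied to the boundedly complete branch (a norm-one boundedly complete basic sequence cannot have bounded partial sums, else the series would converge and the terms would tend to $0$) --- this is the paper's one-line justification --- and (iii)$\Rightarrow$(ii) still holds because a uniformly type P tree has \emph{all} branches type P. With this repair your scheme closes the equivalence; without it, nothing in your proposal yields (iii).

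The rest is essentially the paper's proof, with two repairs needed. In (i)$\Rightarrow$(iv), separability of $Y$ does not make $X$ separable; you must first replace $X$ by the closed linear span of the tree (which inherits $w^*$-PCP) before getting the $w^*$-continuous BCSBFDD, and the paper does not re-prove a dual Ghoussoub--Maurey theorem but quotes it from Rosenthal \cite{R2}; the subsequent branch selection via relatively $w^*$-continuous finite-rank projections (Lemma~\ref{le}) and the summable perturbation to a skipped block sequence match the paper. In (ii)$\Rightarrow$(i), your normalization device fails as stated: one cannot in general pass to a \emph{full} subtree on which all norms $\Vert x_A\Vert$ cluster at a single value $c$, since a full subtree must retain infinitely many children of every node it keeps and the children of different nodes may have norms clustering at different values; moreover rescaling by the constant $1/c$ does not place the nodes exactly in $S_X$. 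The paper itself is terse here (``and normalizing''), and the safe formulation is to run this implication at the level of seminormalized trees, for which Proposition~\ref{p1} gives the contrapositive immediately.
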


We need the following easy

\begin{lemma}\label{le} Let $X$, $Y$ be  Banach spaces with $X$ a subspace of $Y^*$,
 and let $M$ be a finite codimensional subspace of $X$.
Assume that  $\varepsilon>0$ and  $\{x_n^*\}$ is a sequence in $X$
such that $0\in \overline{\{x_n:n\in \natu\}}^{w^*}$. If
$P:X\rightarrow N$ is a linear and relatively $w^*$-continuous
projection onto some finite dimensional subspace $N$ of $X$ with
kernel $M$ then there is $n_0\in \natu$ such that
$dist(x_{n_0}^*,M)<\varepsilon$
\end{lemma}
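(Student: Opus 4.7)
The plan is to use the projection to reduce the problem to a question on the finite-dimensional space $N$. Since $P$ is a projection onto $N$ with kernel $M$, for every $x\in X$ we have $x-P(x)\in\ker P=M$, hence
\[
\mathrm{dist}(x,M)\leq \Vert x-(x-P(x))\Vert=\Vert P(x)\Vert.
\]
Thus it is enough to produce $n_0\in\natu$ with $\Vert P(x_{n_0}^*)\Vert<\varepsilon$, which reduces the problem to a statement about the set $\{P(x_n^*):n\in\natu\}$ inside $N$.

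Next, I would exploit the hypothesis $0\in\overline{\{x_n^*:n\in\natu\}}^{w^*}$. Choose a net $(x_{n_\alpha}^*)$ of terms of the sequence that $w^*$-converges to $0$. By the relative $w^*$-continuity of $P$ we get $P(x_{n_\alpha}^*)\to P(0)=0$ in the $w^*$-topology of $N$ (inherited from $Y^*$). This transfers the approximation of $0$ from $X$ into an approximation taking place inside the finite-dimensional subspace $N$.

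The key observation is that $N$, being finite dimensional, carries only one Hausdorff linear topology; in particular the $w^*$-topology inherited from $Y^*$ coincides on $N$ with the norm topology. Therefore the $w^*$-convergence $P(x_{n_\alpha}^*)\to 0$ upgrades to norm convergence, so $0$ lies in the norm closure of $\{P(x_n^*):n\in\natu\}\subset N$. Pick $n_0$ with $\Vert P(x_{n_0}^*)\Vert<\varepsilon$; combined with the inequality $\mathrm{dist}(x_{n_0}^*,M)\leq \Vert P(x_{n_0}^*)\Vert$ from the first step this gives the conclusion.

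The argument is essentially routine once the setup is in place; the only subtle ingredient is the coincidence of the $w^*$ and norm topologies on the finite-dimensional $N$, which is what allows the relative $w^*$-continuity of $P$ to be promoted into a genuine norm approximation. No separability of $X$ or $Y$ is needed for this lemma, since the approximation is extracted from a net and then evaluated at a single index $n_0$.
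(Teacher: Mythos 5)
Your proof is correct and follows essentially the same route as the paper: use relative $w^*$-continuity of $P$ plus finite-dimensionality of $N$ to find $n_0$ with $\Vert P(x_{n_0}^*)\Vert<\varepsilon$, then note $x_{n_0}^*-P(x_{n_0}^*)\in M$ so that $dist(x_{n_0}^*,M)\leq\Vert P(x_{n_0}^*)\Vert$. You merely spell out (via nets and the uniqueness of Hausdorff linear topologies on $N$) what the paper states in one line.
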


\begin{proof}  From
$0\in\overline{\{x_n^*:n\in \natu\}}^{w^*}$ we deduce that
$0\in\overline{\{P(x_n^*):n\in \natu\}}^{\Vert \cdot\Vert}$, since
$N$ is a finite dimensional subspace of $X$. Now, pick $n_0\in
\natu :\Vert P(x_{n_0}^*)\Vert <\varepsilon$. Then
$$dist(x_{n_0}^*,M)=\Vert x_{n_0}^*+M\Vert=\Vert P(x_{n_0}^*)+M\Vert\leq
\Vert P(x_{n_0}^*)\Vert <\varepsilon .$$
\end{proof}

{\it Proof of theorem} \ref{p2}. iv)$\Rightarrow$iii) is a
consequence of the fact that every boundedly complete basic
sequence is not type P, commented in the introduction and
iii)$\Rightarrow$ii) is trivial.

For ii)$\Rightarrow$i) it is enough applying the theorem \ref{p1}
for $K=B_{X}$ by assuming that $X$ fails  $w^*$-PCP and
normalizing.

i)$\Rightarrow$iv) Assume that $X$ has $w^*$-PCP and pick a
weak-star null tree $\{x_A\}$ in $S_{X}$.

  From \cite{R2} (see (b) of theorem 3.10 joint to the equivalence between (1) and (3) of corollary
  2.6)
  we know that every separable subspace of $Y^*$ with $w^*$-PCP
has a $w^*$-continuous boundedly complete skipped blocking finite
dimensional decomposition. As the subspace generated by the tree
$\{x_A\}$ is separable we can assume that $X$ has it , that is,
  there is a sequence $\{F_j\}$ of finite dimensional subspaces
in $X$ such that:\begin{enumerate} \item $X=[F_j:j\in
\natu]$.\item $F_k\cap[F_j:j\neq k]=\{0\}$ for every $k\in
\natu$.\item For every sequence $\{n_j\}$ of non-negative integers
with $n_j+1<n_{j+1}$ for all $j\in \natu$ and for every $f\in
[F_{(n_j,n_{j+1})}:j\in \natu]$ there exists a unique sequence
$\{f_j\}$ with $f_j\in F_{(n_j,n_{j+1})}$ for all $j\in \natu$
such that $f=\sum_{j=1}^{\infty}f_j$.\item Whenever $f_j\in
F_{(n_j,n_{j+1})}$ for all $j\in \natu$ and $\sup_n\Vert
\sum_{j=1}^{n}f_j\Vert<\infty$ then $\sum_{j=1}^{\infty}f_j$
converges.\item $F_i\cap \overline{[F_j:j\neq i]}^{w^*}= \{0\}$
for every $i$.\end{enumerate} Let $K$ be a positive constant
 such that
every skipped block sequence $\{x_j\}$ of $\{F_n\}$ with $x_j\neq
0$ for every $j$ is a boundedly complete basic sequence with
constant at most $K$.

Observe that for every $n\in \natu$ there is a linear onto
projection\break $\widetilde{P_n}:\overline{[F_i:i\geq
n]}^{w^*}\oplus[F_i:i< n]\rightarrow [F_i:i< n]$ with kernel
$\overline{[F_i:i\geq n]}^{w^*}$ and so $\widetilde{P_n}$ is $w^*$
continuous, since $\overline{[F_i:i\geq n]}^{w^*}\oplus[F_i:i< n]$
is $w^*$-closed subspace of $Y^*$ and hence a dual Banach space
and the closed graph theorem applies to $P_n$ because its kernel
is $w^*$-closed and its range is finite-dimensional. Then the
restriction of $\widetilde{P}_n$ to $X$, let us say $P_n$, is a
linear and relatively $w^*$-continuous projection from $X$ onto
$[F_i:i< n]$ with kernel $[F_i:i\geq n]$.

  We have
to construct a boundedly complete branch of the tree $\{x_A\}$.
For this, fix a sequence $\{\varepsilon_j\}$ of positive real
numbers with $\sum_{j=0}^{\infty}\varepsilon_j<1/2K$, where $K$ is
the constant of the decomposition $\{F_j\}$. Now we
 construct   a sequence $\{f_j\}$ in $X$
with $f_j\in F_{(n_j,n_{j+1})}$ for all $j$, for some increasing
sequence of integers numbers $\{n_j\}$ and a branch $\{x_{A_j}\}$
of the tree such that $\Vert x_{A_j}-f_j\Vert <\varepsilon_j$ for
all $j$. Put $n_0=0$. Then there exists $n_1>2$ and $f_0\in
F_{(n_0,n_1)}$ such that $\Vert x_{A_0}-f_0\Vert<\varepsilon_0$,
where $A_0=\emptyset$. Now, assume that $n_1,\ldots, n_{j+1}$,
$f_1,\ldots, f_j$ and $A_1,\ldots, A_j$ have been constructed. Put
$A_k=(p_1,p_2,\ldots, p_k)$ for all $1\leq k\leq j$. As the tree
is  $w^*$ null we have that
$0\in\overline{\{x_{(A_j,p)}:p\in\natu\}}^{w^*}$. Then, by the
lemma \ref{le}, we deduce that there is some $p_{j+1}\in \natu$
such that
$dist(x_{(A_j,p_{j+1})},[F_{[n_{j+1}+1,\infty)}])<\varepsilon_{j+1}$
  since $[F_{[n_{j+1}+1,\infty)}]$ is a finite
codimensional  subspace in $X$ and $P_{n_{j+1}+1}$ is relatively
$w^*$-continuous. Then there exist $n_{j+2}>n_{j+1}+1$ and
$f_{j+1}\in F_{(n_{j+1},n_{j+2})}$ such that $\Vert
x_{A_{j+1}}-f_{j+1}\Vert<\varepsilon_{j+1}$, where
$A_{j+1}=(A_j,p_{j+1})$. This finishes the inductive construction
of the branch $\{x_{A_j}\}$ satisfying that $\Vert
x_{A_j}-f_j\Vert <\varepsilon_j$ for all $j$. Finally we get that
$\sum_{j=1}^{\infty}\Vert x_{A_j}-f_j\Vert<1/2K$. Then
$\{x_{A_j}\}$ is a branch of the tree $\{x_A\}_{A\in
\natu^{<\omega}}$ which is a basic sequence equivalent to
$\{f_j\}$, being $\{f_j\}$ a skipped block sequence of $\{F_n\}$,
hence $\{x_{A_j}\}$ is a boundedly complete sequence and the proof
of theorem \ref{p2} is finished.

\bigskip

Now we can get a characterization of RNP for dual spaces,
following the above proof.

\begin{theorem}\label{r2} Let $X$ be a Banach space. Then the following
assertions are equivalent:\begin{enumerate}\item[i)] $X^*$ has
 RNP. \item[ii)] Every  topologically weak-star null tree in $S_{X}$ is not
uniformly type P. \item[iii)] Every   topologically weak-star null
tree in $S_{X}$ has not type P branches. \item[iv)] Every
topologically weak-star null tree in $S_{X}$ has a boundedly
complete branch.\end{enumerate}\end{theorem}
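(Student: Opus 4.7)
The strategy is to mirror the proof of Theorem \ref{p2} in this more general dual setting, substituting Proposition \ref{r1} for Proposition \ref{p1} and exploiting that RNP is separably determined. The implications iv) $\Rightarrow$ iii) $\Rightarrow$ ii) are routine: a seminormalized boundedly complete basic sequence cannot be of type P (otherwise its partial sums would converge and force the norms to tend to zero), so iv) $\Rightarrow$ iii) holds, and iii) $\Rightarrow$ ii) is immediate from the definition of uniformly type P.

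For ii) $\Rightarrow$ i) I argue the contrapositive. If $X^*$ fails RNP, then $K = B_{X^*}$ is a $w^*$-compact convex subset of $X^*$ failing RNP, and Proposition \ref{r1} furnishes a seminormalized topologically $w^*$-null tree $\{x_A\}$ with $\{\sum_{C\leq A} x_C : A \in \natu^{<\omega}\} \subset B_{X^*}$. Such a tree is uniformly type P, and a rescaling together with passage to a full subtree on which the norms stabilize places the tree in the unit sphere of $X^*$ while preserving both uniform type P and topological $w^*$-nullity, contradicting ii).

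The main step is i) $\Rightarrow$ iv). Given a topologically $w^*$-null seminormalized tree $\{x_A\}$ in the unit sphere of $X^*$, let $Z$ be the closed linear span of $\{x_A : A \in \natu^{<\omega}\}$. Then $Z$ is a separable subspace of the RNP space $X^*$, so $Z$ itself has RNP and consequently $w^*$-PCP. By Remark \ref{remar} I may assume $Z \subset Y^*$ with $Y$ separable, preserving both the relative $w^*$-topology and the $w^*$-PCP of $Z$; in particular the tree remains topologically $w^*$-null in this embedding. From here I follow the proof of i) $\Rightarrow$ iv) of Theorem \ref{p2} verbatim: extract from \cite{R2} a $w^*$-continuous BCSBFDD $\{F_j\}$ for $Z$ in $Y^*$, and inductively construct a branch $\{x_{A_j}\}$ close enough to a skipped block sequence of $\{F_j\}$ to be equivalent to it, and therefore boundedly complete. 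The crucial observation making this transfer possible is that Lemma \ref{le} requires only $0 \in \overline{\{x_n^*\}}^{w^*}$, which is exactly the topologically $w^*$-null condition---no genuine $w^*$-convergence is ever invoked. The chief technical subtlety is the compatibility claim in Remark \ref{remar} that the separable re-embedding preserves the relative $w^*$-topology on $Z$; this is built into the construction in \cite{R2}.
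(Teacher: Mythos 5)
Your proposal is correct in substance and follows the same skeleton as the paper's argument (iv)$\Rightarrow$iii)$\Rightarrow$ii) as you state them, ii)$\Rightarrow$i) via Proposition \ref{r1} applied to $K=B_{X^*}$, and i)$\Rightarrow$iv) by reduction to the separable-predual setting of Theorem \ref{p2}), but the reduction in i)$\Rightarrow$iv) is organized differently. The paper takes the closed span $Y$ of the tree, chooses a separable subspace $Z\subset X$ norming $Y$ (so $Y$ embeds isometrically into $Z^*$ via restriction), uses that $Z^*$ has RNP, hence $w^*$-PCP, because $X^*$ has RNP and $Z$ is separable, and then, since $w^*$ is metrizable on bounded sets of $Z^*$, extracts a full $w^*$-null subtree before running the construction of Theorem \ref{p2}. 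You instead keep the topologically $w^*$-null tree and observe that the inductive construction in Theorem \ref{p2} uses the tree only through Lemma \ref{le}, whose hypothesis is exactly $0\in\overline{\{x_{(A,n)}:n\in\natu\}}^{w^*}$; that observation is correct and lets you skip the subtree extraction, at the cost of invoking the proof rather than the statement of Theorem \ref{p2} (which the paper does anyway) and of routing the reduction through Remark \ref{remar}, whose $w^*$-compatibility you rightly flag (only one direction is needed: restriction to a norming separable subspace of $X$ is $w^*$-to-$w^*$ continuous, so topological $w^*$-nullity survives). Two points to tighten: the step ``$Z$ has RNP and consequently $w^*$-PCP'' should be justified through the ambient space rather than through $Z$ intrinsically --- $B_Z\subset B_{X^*}$, and $B_{X^*}$ is $w^*$-compact convex with RNP, hence has $w^*$-PCP by theorem 4.2.13 of \cite{bou}, a property inherited by subsets; and in ii)$\Rightarrow$i) your ``full subtree on which the norms stabilize'' does not by itself show that rescaling into the sphere preserves uniform type P (boundedness of partial sums is not stable under multiplying each node by $1/\Vert x_A\Vert$, and stabilizing the norms of the children of each node does not control the variation of the norms along a branch), although here you are exactly as terse as the paper's own single word ``normalizing'', and the preservation of topological $w^*$-nullity under such rescaling is indeed automatic for seminormalized trees.
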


\begin{proof} iv)$\Rightarrow$iii) is a
consequence of the fact that every boundedly complete basic
sequence is not type P, commented in the introduction and
iii)$\Rightarrow$ii) is trivial.

For ii)$\Rightarrow$i) it is enough applying the theorem \ref{r1}
for $K=B_{X^*}$ by assuming that $X^*$ fails   RNP and
normalizing.

i)$\Rightarrow$iv) Assume that $X^*$ has RNP and pick a
topologically weak-star null tree $\{x_A\}$ in $S_{X^*}$. Call $Y$
the closed linear span of the tree $\{x_A\}$. Now $Y$  is a
separable subspace of $X^*$ and then there is a separable subspace
$Z$ of $X$ norming $Y$ so that $Y$ is isometric to a subspace of
$Z^*$. As $X^*$ has RNP, we get that $Z^*$ has RNP. Hence $Y$ is a
separable subspace of $Z^*$, being $Z$ a separable space, and
$Z^*$ has $w^*$-PCP since $Z^*$ has RNP. Observe that the tree
$\{x_A\}$ is now a topologically weak-star null tree in $S_Z^*$,
so we can select a full weak-star null subtree of $\{y_A\}$, since
$Z$ is separable and so the $w^*$ topology in $Z^*$ is metrizable
for bounded sets. We apply the proof of i) $\Rightarrow$ iv) in
the above result with $X=Y=Z^*$ to get a boundedly complete branch
of $\{y_A\}$. As $\{y_A\}$ is a full subtree of $\{x_A\}$, the
branches of $\{y_A\}$ are branches of $\{x_A\}$ and $\{x_A\}$ has
a boundedly complete branch.\end{proof}

In the case $X$ is a separable Banach space, the above result can
be written in a terms of weak-star null trees. Then we get as an
immediate consequence in the  following corollary  a result
obtained in \cite{DF} in a different way.

\begin{corollary} Let $X$ be a separable Banach space. Then $X^*$
is separable  (equivalently, $X^*$ has RNP) if, and only if, every
weak-star null tree in $S_{X^*}$ has a boundedly complete
branch.\end{corollary}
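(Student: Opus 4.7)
The plan is to deduce this corollary directly from Theorem \ref{r2}. Since $X$ is separable, Stegall's classical theorem gives that $X^*$ has the Radon-Nikodym property if and only if $X^*$ is separable, so the parenthetical equivalence comes for free. It therefore suffices to show that, in the separable setting, the condition appearing in Theorem \ref{r2}~iv)---``every topologically $w^*$-null tree in $S_{X^*}$ has a boundedly complete branch''---is equivalent to the weaker-looking condition of the corollary---``every $w^*$-null tree in $S_{X^*}$ has a boundedly complete branch''.

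One implication is immediate: every $w^*$-null tree is automatically topologically $w^*$-null, since $w^*$-convergence of $\{x_{(A,n)}\}_n$ to $0$ forces $0$ to lie in the $w^*$-closure of $\{x_{(A,n)}:n\in\natu\}$. Hence if every topologically $w^*$-null tree in $S_{X^*}$ has a boundedly complete branch, the same is a fortiori true for every $w^*$-null tree, giving one direction of the corollary.

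For the converse, I would start from an arbitrary topologically $w^*$-null tree $\{x_A\}$ in $S_{X^*}$ and extract a full subtree that is $w^*$-null. Since $X$ is separable, the $w^*$-topology on the bounded set $S_{X^*}$ is metrizable; so at each node $A$, the fact that $0$ is a $w^*$-cluster point of $\{x_{(A,n)}:n\in\natu\}$ actually yields a strictly increasing map $\sigma_A:\natu\to\natu$ such that $\{x_{(A,\sigma_A(n))}\}_n$ is $w^*$-null. Setting $y_{\emptyset}=x_{\emptyset}$ and $y_{(A,n)}=x_{(A,\sigma_A(n))}$ produces, by the very definition given in the introduction, a full subtree $\{y_A\}$ of $\{x_A\}$ that is $w^*$-null. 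By hypothesis $\{y_A\}$ admits a boundedly complete branch, and because every branch of a full subtree is also a branch of the original tree, this supplies the required boundedly complete branch of $\{x_A\}$.

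There is really no significant obstacle: the metrizability argument is exactly the one already performed inside the proof of i)$\Rightarrow$iv) of Theorem \ref{r2}, and the rest is purely formal. The only point worth flagging is the appeal to Stegall's theorem to identify separability of $X^*$ with RNP of $X^*$ when $X$ is separable; Theorem \ref{r2} on its own yields the RNP formulation, and without this identification the ``separable'' phrasing in the corollary would not appear.
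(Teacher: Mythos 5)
Your argument is correct and follows the paper's own route: both reduce the corollary to Theorem \ref{r2} by noting that metrizability of the $w^*$-topology on bounded sets of $X^*$ (for $X$ separable) lets one pass from a topologically $w^*$-null tree to a full $w^*$-null subtree whose branches are branches of the original tree, and both invoke the identification of separability of $X^*$ with RNP of $X^*$. Your write-up merely makes explicit the two directions and the trivial implication that $w^*$-null trees are topologically $w^*$-null, which the paper leaves implicit.
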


\begin{proof} When $X$ is separable, the weak-star topology in
$X^*$ is metrizable on bounded sets and so every topologically
weak-star null tree in $S_{X^*}$ has a full subtree which is
weak-star null. With this in mind, it is enough to apply the above
theorem to conclude, since $X^*$ is separable if, and only if,
$X^*$ has RNP, whenever $X$ is separable. \end{proof}

The following consequence, obtained in a different way  in
\cite{R1}, shows how many separable and dual subspaces contains
every Banach space with PCP.

\begin{corollary} Let $X$ a Banach space with PCP. Then every
seminormalized basic sequence in $X$ has a boundedly complete
subsequence.\end{corollary}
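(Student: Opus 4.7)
The plan is to deduce this from Theorem \ref{p2} by producing, from the given sequence, a $w^*$-null tree in an appropriate dual whose branches are subsequences of $\{x_n\}$.

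After normalizing I assume $\{x_n\}\subset S_X$ and set $Y=[x_n]$. As a subspace of $X$, $Y$ is separable and inherits PCP. Via the canonical embedding $Y\hookrightarrow Y^{**}$, $Y$ has $w^*$-PCP as a subspace of its bidual, and Remark \ref{remar} then furnishes a separable subspace $Z\subset Y^*$ such that $Y$ is isometric to a subspace of $Z^*$ and still has $w^*$-PCP there. This puts us in the setting of Theorem \ref{p2}.

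By Rosenthal's $\ell_1$-theorem, either $\{x_n\}$ has a subsequence equivalent to the unit vector basis of $\ell_1$---which is already boundedly complete, so we would be done---or $\{x_n\}$ has a weakly Cauchy subsequence $\{y_n\}$. A basic sequence that converges weakly must converge to zero (its biorthogonal expansion leaves no room for a nontrivial limit), so whenever the weak-$*$ limit of $\{y_n\}$ in $Y^{**}$ belongs to $Y$, the sequence $\{y_n\}$ is automatically weakly null. In that situation $\{y_n\}$ is $w^*$-null in $Z^*$ because $Z\subset Y^*$, and the natural tree $y_A=y_{\max A}$ is $w^*$-null at every node. Theorem \ref{p2} then produces a boundedly complete branch; since the terms of any basic sequence are pairwise distinct, this branch admits a subsequence with strictly increasing indices, which is a genuine subsequence of $\{x_n\}$ and remains boundedly complete.

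The main obstacle is the remaining case, in which the weak-$*$ limit $y^{**}$ of $\{y_n\}$ lies in $Y^{**}\setminus Y$. PCP enters here through the fact that $X$ contains no isomorphic copy of $c_0$, so Rosenthal's $c_0$-theorem yields a strongly summing subsequence. The cleanest way I would dispose of this case is to re-choose the predual $Z$ in Remark \ref{remar} inside the norm-closed, $w^*$-dense hyperplane $\ker(y^{**})\subset Y^*$; since $y^{**}\notin Y$ this hyperplane is still norming for $Y$ up to an equivalent constant, and the $w^*$-PCP of $Y$ in the new $Z^*$ is arranged by taking $Z$ sufficiently large while remaining separable. With this modification the sequence $\{y_n\}$ becomes $w^*$-null in $Z^*$ and the argument of the previous paragraph applies, yielding the desired boundedly complete subsequence of $\{x_n\}$.
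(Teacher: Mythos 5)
Your first case (weak-$*$ limit in $Y$, hence $\{y_n\}$ weakly null) is correct and is essentially the paper's argument. The genuine gap is in your treatment of the case $y^{**}\in Y^{**}\setminus Y$. Re-choosing the predual inside the hyperplane $\ker(y^{**})\subset Y^*$ does make $\{y_n\}$ $w^*$-null, and $\ker(y^{**})$ is indeed norming for $Y$ since $y^{**}\notin Y$; but to invoke Theorem \ref{p2} you must know that $Y$ (or $[y_n]$) has the $w^*$-PCP \emph{with respect to the topology $\sigma(Y,Z)$ for your new $Z\subset\ker(y^{**})$}, and this is exactly what is missing. The hypothesis you have is PCP of $Y$, i.e.\ $w^*$-PCP for $\sigma(Y,Y^*)$; the topology $\sigma(Y,\ker(y^{**}))$ is strictly coarser (it drops precisely the direction $y^{**}$), so points of weak continuity need not be points of continuity for it, and Remark \ref{remar} cannot produce the missing property: Rosenthal's theorem takes as input a subspace that \emph{already} has $w^*$-PCP in the ambient dual and only shrinks the predual, it never upgrades PCP to $w^*$-PCP for a smaller norming subspace. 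In particular ``taking $Z$ sufficiently large'' cannot help, since even the maximal choice $Z=\ker(y^{**})$ (ignoring separability) leaves the claim unproven; equivalently, you would be asserting that $w^*$-PCP survives passing to the quotient $(\ker y^{**})^*\cong Y^{**}/[y^{**}]$, which is not justified.

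Note also that you extract a strongly summing subsequence via the $c_0$-theorem and then never use it, whereas in the paper this is the key to this case and is what makes the hyperplane trick unnecessary: one works with the shifted sequence $\{x_n-x^{**}\}$, which is honestly $w^*$-null in $X^{**}$, observes that $X\oplus[x^{**}]\subset X^{**}$ has $w^*$-PCP (here PCP of $X$ \emph{does} transfer, because the relative $\sigma(X^{**},X^*)$-topology on $X$ is the weak topology), applies Remark \ref{remar} and Theorem \ref{p2} to $[x_n-x^{**}]$ to get a boundedly complete subsequence of $\{x_n-x^{**}\}$, and finally uses the strongly summing property (convergence of $\sum_n\lambda_n$, hence of $\sum_n\lambda_n x^{**}$) to transfer bounded completeness from $\{x_n-x^{**}\}$ back to $\{x_n\}$. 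Your proof would be repaired by replacing the hyperplane construction with this shift-and-transfer argument.
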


\begin{proof} Pick $\{x_n\}$ a seminormalized basic sequence in
$X$.  Then either $\{x_n\}$ has a subsequence equivalent to the
unit vector basis of $\ell_1$, and hence boundedly complete, or
$\{x_n\}$ has a weakly Cauchy subsequence which we denotes again
$\{x_n\}$.

In the case $\{x_n\}$ is weakly convergent we get that $\{x_n\}$
is weakly null, because it is a basic sequence. Now, $\{x_n\}$ is
a seminormalized weakly null tree in $X$ and hence $\{x_n\}$ is a
seminormalized weak-star null tree in $X^{**}$. As $[x_n]$ is a
separable subspace of $X^{**}$ with $w^*$-PCP, from \ref{remar}
there is a separable subspace $Z\subset X^*$ such that $[x_n]$ is
an isometric subspace of $Z^*$ with $w^*$-PCP. Therefore $\{x_n\}$
is a seminormalized weak-star null tree in $[x_n]$, which is a
subspace of $Z^*$ with $w^*$-PCP, being $Z$ separable. From
theorem, we get a boundedly complete branch and so a boundedly
complete subsequence.

If $\{x_n\}$ is not weakly convergent we can apply the
$c_0$-theorem \cite{R3} to get a strongly summing subsequence,
denoted again by $\{x_n\}$, since $X$ has PCP and so $X$ does not
contain $c_0$. Let $x^{**}=w^*-lim_n\ x_n\in X^{**}$. Now
$\{x_n-x^{**}\}$ is a weak-star null sequence in $X\oplus
[x^{**}]>\subset X^{**}$. As $X$ has PCP, we get that $X$ has
$w^*$-PCP as a subspace of $X^{**}$, then it is easy to see that
$X\oplus [x^{**}]\subset X^{**}$ has $w^*$-PCP. Now $[x_n-x^{**}]$
is a separable subspace of $X^{**}$ with $w^*$-PCP and then, from
\ref{remar} there is $Z$ a separable subspace of $X^*$ such that
$[x_n-x^{**}]$ is an isometric subspace of $Z^*$ with the
$w^*$-PCP, being $Z$ separable. From theorem, we get a boundedly
complete branch and so a boundedly complete subsequence, denoted
again by $\{x_n-x^{**}\}$. So we have that $\{x_n-x^{**}\}$ is
boundedly complete and $\{x_n\}$ is strongly summing. Let us see
that $\{x_n\}$ is boundedly complete. Indeed, if for some sequence
of scalars $\{\lambda_n\}$ we have that
$\sup_n\Vert\sum_{k=1}^n\lambda_n x_n\Vert<\infty$, then the
series $\sum_n\lambda_n$ is convergent, since $\{x_n\}$ is
strongly summing. Now it is clear that
$\sup_n\Vert\sum_{k=1}^n\lambda_n (x_n-x^{**})\Vert<\infty$ and
then $\sum_n\lambda_n x_n-x^{**}$ converges, since
$\{x_n-x^{**}\}$ is boundedly complete. So $\sum_n\lambda_n x_n$
converges, since $\sum_n\lambda_n$ is convergent, and $\{x_n\}$ is
boundedly complete.\end{proof}

The converse of the above result is false, even for Banach spaces
not containing $\ell_1$ (see \cite{GL}).

Now we pass to show some consequences about the problem of the
determination of $w^*$-PCP by subspaces with a basis. We begin by
proving that every seminormalized weak-star null tree has a basic
full $w^*$-null subtree. The same result is then true for the weak
topology, by considering $X$ as a subspace of $X^{**}$. We don't
know exact reference for this result, so we give a proof based on
the Mazur proof of the known result that every seminormalized
sequence in a dual Banach space such that  $0$ belongs to  its
weak-star closure has a basic subsequence.

\begin{lemma}\label{lema3} Let $X$ be a Banach space and
$\{x_A\}_{A\in\natu^{<\omega}}$ a seminormalized  $w^*$ null tree
in $S_{X^*}$. Then for every $\varepsilon>0$ there is a full basic
subtree still $w^*$ null with basic constant less than
$1+\varepsilon$\end{lemma}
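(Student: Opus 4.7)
The plan is to adapt Mazur's classical basic-subsequence extraction to a tree-wide enumeration. Fix positive reals $(\delta_k)_{k\geq 1}$ with $\prod_{k\geq 1}(1+\delta_k)<1+\varepsilon$, and enumerate $\natu^{<\omega}$ as $T_0=\emptyset,T_1,T_2,\ldots$ so that every strict ancestor of $T_k$ lies among $T_0,\ldots,T_{k-1}$, and whenever $T_k=(A,n)$ with $n\geq 2$, the sibling $(A,n-1)$ also appears earlier. Enumerating nodes by length, with any fixed enumeration of children per node, achieves this.

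Starting from $y_\emptyset=x_\emptyset$, I construct the full subtree by assigning to each $A\in\natu^{<\omega}$ a strictly increasing map $\sigma_A\colon\natu\to\natu$ (one value at a time) and setting $y_A=x_{\phi(A)}$, where $\phi\colon\natu^{<\omega}\to\natu^{<\omega}$ is defined recursively by $\phi(\emptyset)=\emptyset$ and $\phi((A,n))=(\phi(A),\sigma_A(n))$; this guarantees that every branch of $\{y_A\}$ is a branch of $\{x_A\}$. At step $k$ with $T_k=(A,n)$, both $\phi(A)$ and (when $n\geq 2$) $\sigma_A(n-1)$ have already been chosen. Let $F_k=\mathrm{span}\{y_{T_0},\ldots,y_{T_{k-1}}\}$, a finite-dimensional subspace of $X^*$. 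Using that the predual $X$ norms $X^*$, the standard dual Mazur lemma yields Hahn--Banach norming functionals $f_1,\ldots,f_m\in B_X$ for a fine net of $S_{F_k}$, together with $\eta>0$, such that any $z\in X^*$ with $\|z\|\leq\sup_B\|x_B\|$ and $|f_i(z)|\leq\eta$ for every $i$ satisfies
\[
\|e\|\leq(1+\delta_k)\,\|e+\lambda z\|\qquad\text{for all }e\in F_k\text{ and all scalars }\lambda.
\]
The verification splits according to whether $|\lambda|\|z\|$ is small or large relative to $\|e\|$.

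Since $\{x_{(\phi(A),j)}\}_j$ is $w^*$-null in $X^*$ and $f_i\in X$, one can pick $\sigma_A(n)>\sigma_A(n-1)$ so large that $|f_i(x_{(\phi(A),\sigma_A(n))})|\leq\eta$ for every $i\leq m$, and set $y_{T_k}=x_{(\phi(A),\sigma_A(n))}$. The one-step Mazur estimate then holds at every step $k\geq 1$ relative to $F_k$, and telescoping over $k$ shows that the enumerated sequence $(y_{T_k})_{k\geq 0}$ is a basic sequence with basic constant at most $\prod_{k\geq 1}(1+\delta_k)<1+\varepsilon$. Hence $\{y_A\}$ is a basic full subtree with the required basic constant. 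Each sequence $\{y_{(A,n)}\}_n$ is a subsequence of the $w^*$-null $\{x_{(\phi(A),j)}\}_j$, so the subtree remains $w^*$-null, and seminormalization is inherited from $\{x_A\}$.

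The main technical ingredient is the dual-space Mazur lemma; the subtle point is that the controlling functionals must be drawn from the predual $X$ (rather than from $X^{**}$) so that the hypothesis of weak-star nullness at each node actually produces source-tree elements on which every $|f_i|$ is small. The tree-level bookkeeping --- maintaining strict monotonicity of each $\sigma_A$ and keeping track of $\phi(A)$ --- is otherwise routine once the enumeration $(T_k)$ is in place.
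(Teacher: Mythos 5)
Your proposal is correct and follows essentially the same route as the paper's proof: enumerate $\natu^{<\omega}$ so that ancestors and earlier siblings come first, at each step choose a far-enough element of the $w^*$-null sequence at the relevant node so that finitely many functionals from the predual $X$ which (approximately) norm the span of the previously chosen vectors are small on it, keep the maps $\sigma_A$ strictly increasing to get a full subtree, and run the classical Mazur telescoping estimate to obtain basic constant below $1+\varepsilon$, with $w^*$-nullness preserved since each node's children form a subsequence of the original ones. One small point: as stated your one-step Mazur lemma also needs a lower bound on $\Vert z\Vert$ (here $\Vert z\Vert=1$ since the tree lies in $S_{X^*}$) so that $|\lambda|$ can be controlled in the small/large $\lambda$ case split; with that trivial fix the argument matches the paper's.
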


\begin{proof} Let $\phi:\natu^{<\omega}\rightarrow \natu\cup\{0\}$
be a fixed bijective map such that $\phi(\emptyset)=0$,
$\phi(A)\leq\phi(B)$ whenever $A\leq B\in\natu^{<\omega}$ and
$\phi(A,i)\leq\phi(A,j)$ whenever $A\in\natu^{<\omega}$ and $i\leq
j$. Fix also $\varepsilon>0$ and a sequence of positive numbers
$\{\varepsilon_n\}_{n\geq 0}\in (0,1)$ such that
$\frac{1+\sum_{n=0}^{\infty}\varepsilon_n}{\prod_{n=0}^{\infty}(1-\varepsilon_n)}
<1+\varepsilon$. Now we proceed by induction to construct the
desired subtree $\{y_A\}_{A\in\natu^{<\omega}}$, following the
order given by $\phi$  to define for every $A\in\natu^{<\omega}$
$y_A$ and   get in this way the full condition. That is, we have
to prove that for every $n\in\natu\cup \{0\}$ we can construct
$y_{\phi^{-1}(n)}$  such that $\{y_{A}\}_{A\in\natu^{<\omega}}$ is
a  $w^*$ null full subtree satisfying that for every
$n\in\natu\cup\{0\}$ there is a finite set
$\{f_1^n,\ldots,f_{k_n}^n\}\subset S_{X}$ such
that\begin{enumerate}\item[i)] $\{f_1^n,\ldots,f_{k_n}^n\}$ is a
$(1-\varepsilon_n)$-norming  set for
$Y_n=[y_{\phi^{-1}(0)},\ldots, y_{\phi^{-1}(n)}]$. \item[ii)]
$\vert f_i^n(y_{\phi^{-1}(n+1)})\vert <\varepsilon_n$ for every
$i$.\item[iii)] For every $A\in \natu^{<\omega}$ there is an
increasing map $\sigma_A:\natu\rightarrow\natu$ such that
$y_{(A,i)}=x_{(A,\sigma_{A}(i))}$ for every $i$.\end{enumerate}

For $n=0$, we have $\phi^{-1}(0)=\emptyset$ and then we define
$y_{\emptyset}=x_{\emptyset}$. Now take $f_1^0\in S_{X}$
$(1-\varepsilon_0)$-norming the subspace $Y_0=[y_{\emptyset}]$. As
the tree $\{x_A\}_{A\in\natu^{<\omega}}$ is $w^*$ null there is
$p_0\in\natu$ such that $\vert f_1^0(x_{(p)})\vert <
\varepsilon_0$ for every $p\geq p_0$.   Then we do
$y_{\phi^{-1}(1)}=x_{(p_0)}$. As $\phi(A,i)\leq\phi(A,j)$ whenever
$A\in\natu^{<\omega}$ and $i\leq j$ we deduce that
$\phi^{-1}(1)=(1)$ and  define $\sigma_{\emptyset}(1)=p_0$ so that
 $y_{(\emptyset,1)}=x_{(\emptyset ,\sigma_{\emptyset}(1))}$.

Assume $n\in\natu$ and that we have already defined
$y_{\phi^{-1}(0)},\ldots ,y_{\phi^{-1}(n-1)}$. Now
$\phi^{-1}(n)-<\phi^{-1}(n)$, then $\phi(\phi^{-1}(n)-)<n$ and so
$y_{\phi^{-1}(n)-}$ has been already constructed, by induction
hypotheses. Put $\phi^{-1}(n)=(A,h)$ for some $h\in\natu$, where
$A=\phi^{-1}(n)-$. As $\phi(A,k)\leq \phi(A,h)$ for $k\leq h$ we
have that $y_{(A,k)}$ has been constructed with
$y_{(A,k)}=x_{(A,\sigma_{A}(k))}$ whenever $k< h$ and
$\sigma_{A}(k)$ has been constructed strictly increasing for $k<
h$. Put $Y_{n-1}=[y_{\phi^{-1}(0)},\ldots ,y_{\phi^{-1}(n-1)}]$
and pick $f_1^{n-1},\ldots ,f_{k_{n-1}}^{n-1}$ elements in
$S_{Y_{n-1}}$ $(1-\varepsilon_{n-1})$-norming $Y_{n-1}$. As the
tree $\{x_A\}_{A\in\natu^{<\omega}}$ is $w^*$ null there is
$p_{n-1}>\max_{k<h}\sigma_{A}(k)$ such that  $  \vert
f_i^{n-1}(x_{(A,p)})\vert <\varepsilon_{n-1},\ 1\leq i\leq
k_{n-1}$ for every $p\geq p_{n-1}$. Then we do
$y_{\phi^{-1}(n)}=x_{(A,p_{n-1})}$ and $\sigma_{A}(h)=p_{n-1}$.
Then $\sigma_{A}(k)$ is constructed being strictly increasing for
$k\leq h$ and $y_{\phi^{-1}(n)}=y_{(A,h)}=x_{(A,\sigma_A(h))}$.
Now the construction of the subtree $\{y_A\}$ is complete
satisfying i), ii) and iii). From the construction we get that
$\{y_A\}$ is a full and $w^*$-null subtree.

Let us see that $\{y_{\phi^{-1}(n)}\}$ is a basic sequence in $X$.
Put $z_n=y_{\phi^{-1}(n)}$, fix $p<q\in\natu$ and compute
$\Vert\sum_{i=1}^q\lambda_iz_i\Vert$, where $\{\lambda_i\}$ is a
scalar sequence. Assume that
$\Vert\sum_{i=1}^q\lambda_iz_i\Vert\leq 1$.   From i) pick $j$
such that $\vert f_j^{q-1}(\sum_{i=1}^{q-1}\lambda_iz_i)\vert
>(1-\varepsilon_{q-1})\Vert\sum_{i=1}^{q-1}\lambda_iz_i\Vert.$ Then we have  from ii)
$\vert f_j^{q-1}(z_q)\vert <\varepsilon_{q-1}$ and so

$$\Vert\sum_{i=1}^q\lambda_iz_i\Vert\geq \vert
f_j^{q-1}(\sum_{i=1}^q\lambda_iz_i)\vert
>(1-\varepsilon_{q-1})\Vert\sum_{i=1}^{q-1}\lambda_iz_i\Vert -\varepsilon_{q-1}.$$

By repeating this computation we get
$$\Vert \sum_{i=1}^q\lambda_iz_i\Vert\geq
(\prod_{i=p+1}^q(1-\varepsilon_{i-1}))\Vert
\sum_{i=1}^p\lambda_iz_i\Vert
-(\sum_{i=p+1}^q\varepsilon_{i-1}),$$ and so,
$$\Vert\sum_{i=1}^p\lambda_iz_i\Vert\leq
\frac{1+\sum_{i=p+1}^q\varepsilon_{i-1}}{\prod_{i=p+1}^q(1-\varepsilon_{i-1})}<1+\varepsilon$$
The last inequality proves that $\{z_n\}$ is a basic sequence in
$X$ with basic constant less than $1+\varepsilon$ and the proof is
complete.\end{proof}

We don¬t know if the above result is still true changing weak-star
 null by topologically weak-star null.

The following result shows that $w^*$-PCP is determined by
subspaces with a Schauder basis in the natural setting of dual
spaces of separable Banach spaces.

\begin{corollary}\label{fin} Let $X$, $Y$ be Banach spaces such that $Y$ is
separable and $X$ is a subspace of $Y^*$. Then $X$ has $w^*$-PCP
if, and only if, every subspace of $X$ with a Schauder basis has
$w^*$-PCP.\end{corollary}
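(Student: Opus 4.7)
The plan is to prove the two implications separately. The forward direction is immediate: if $X$ has $w^*$-PCP, then for any subspace $Z \subseteq X$, the set $B_Z$ is a closed subset of $B_X$, and $w^*$-PCP passes to closed subsets, so $Z$ inherits $w^*$-PCP (whether or not $Z$ has a Schauder basis). So I would dispatch this in one sentence.

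For the nontrivial direction, I would argue by contrapositive: assume that $X$ fails $w^*$-PCP and produce a subspace of $X$ with a Schauder basis that fails $w^*$-PCP. By the equivalence (i)$\Leftrightarrow$(ii) in Theorem \ref{p2}, applied to the pair $(X,Y)$, the failure of $w^*$-PCP yields a seminormalized $w^*$-null tree $\{x_A\}_{A\in\natu^{<\omega}}$ in $S_X$ which is uniformly type P. After normalizing and rescaling, I can assume this tree sits in $S_X$ and is uniformly type P with a uniform bound on partial sums along branches.

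Now I would apply Lemma \ref{lema3} to extract a full basic $w^*$-null subtree $\{y_A\}_{A\in\natu^{<\omega}}$ of $\{x_A\}$. Being a full subtree, every branch of $\{y_A\}$ is also a branch of $\{x_A\}$, so the property of being uniformly type P is inherited: every branch of $\{y_A\}$ is type P with the same uniform bound. Moreover, the countable set $\{y_A : A \in \natu^{<\omega}\}$ is (for some rearrangement) a basic sequence in $X$, so its closed linear span $Z := \overline{\mathrm{span}}\{y_A : A \in \natu^{<\omega}\}$ is a subspace of $X$ with a Schauder basis.

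Finally, I would view $Z$ as a subspace of $Y^*$ (via $Z \subseteq X \subseteq Y^*$, still with $Y$ separable) and apply Theorem \ref{p2} again, this time to the pair $(Z,Y)$. The tree $\{y_A\}$ lies in $S_Z$, is $w^*$-null in $Y^*$ (hence $w^*$-null in $Z$), and is uniformly type P, so by the equivalence (i)$\Leftrightarrow$(ii) of Theorem \ref{p2}, $Z$ fails $w^*$-PCP. This contradicts the hypothesis that every subspace of $X$ with a Schauder basis has $w^*$-PCP, and the proof is complete. The only delicate point is verifying that uniform type P passes to a full subtree, which is essentially by definition since branches of the subtree are branches of the original tree; the rest is a clean two-step application of Theorem \ref{p2} bridged by Lemma \ref{lema3}.
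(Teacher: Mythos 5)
Your proof is correct and follows essentially the same route as the paper: negate the nontrivial direction via Theorem \ref{p2}, extract a full basic $w^*$-null subtree with Lemma \ref{lema3}, observe that the relevant branch property passes to the full subtree, and apply Theorem \ref{p2} again to the span of the subtree. The only (harmless) difference is that you invoke the equivalence with item (ii) (uniformly type P) where the paper uses item (iv) (no boundedly complete branches), and you also record the trivial forward implication, which the paper leaves unstated.
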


\begin{proof} Assume that $X$ fails $w^*$-PCP. Then, from theorem \ref{p2},
there is a $w^*$-null tree in the unit sphere of $X$ without
boundedly complete branches. Now, from lemma \ref{lema3}, we can
extract a $w^*$-null full basic subtree. The subspace generated by
this subtree $Z$ is a subspace of $X$ with a Schauder basis
containing a $w^*$-null tree in $S_X$ without boundedly complete
branches, from the full condition, so $Z$ fails $w^*$-PCP, from
theorem \ref{p2}\end{proof}

As a consequence we get, for example, that a subspace of
$\ell_{\infty}$, the space of bounded scalar sequences with the
sup norm, failing the $w^*$-PCP ( or failing PCP) contains a
further subspace with a Schauder basis failing the $w^*$-PCP.

If we do $X=Y^*$ in the above corollary one deduces the following

\begin{corollary} Let $X$ be a separable Banach space. Then $X^*$ has
RNP if, and only if, every subspace of $X^*$ with a Schauder basis
has $w^*$-PCP.\end{corollary}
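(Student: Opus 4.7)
The plan is to reduce the statement to Corollary \ref{fin} together with the elementary fact, recalled in the introduction, that RNP and $w^*$-PCP coincide on convex $w^*$-compact subsets of a dual space. Since $B_{X^*}$ is convex and $w^*$-compact, the assertion ``$X^*$ has RNP'' is equivalent to ``$X^*$ has $w^*$-PCP''. So it suffices to show that $X^*$ has $w^*$-PCP if and only if every subspace of $X^*$ with a Schauder basis has $w^*$-PCP.

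For the forward direction I would observe that $w^*$-PCP is trivially inherited by subspaces: if every nonempty closed subset of $B_{X^*}$ has a point of $w^*$-to-norm continuity, then the same is true of every nonempty closed subset of $B_Z$ for any subspace $Z$ of $X^*$, because $B_Z$ is a closed subset of $B_{X^*}$ and the relative $w^*$-topology on $Z$ agrees with the $w^*$-topology inherited from $X^*$.

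For the reverse direction, I would apply Corollary \ref{fin} with $Y := X$ (separable by hypothesis) and with the subspace ``$X$'' of the corollary taken to be $X^*$ itself, viewed as a subspace of $Y^* = X^*$. The corollary then gives exactly the equivalence: $X^*$ has $w^*$-PCP if and only if every subspace of $X^*$ with a Schauder basis has $w^*$-PCP. Combining with the RNP/$w^*$-PCP coincidence on $B_{X^*}$ yields the stated result.

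There is no serious obstacle here; the entire construction of the basic $w^*$-null subtree witnessing the failure of $w^*$-PCP has already been performed in Lemma \ref{lema3} and exploited in Corollary \ref{fin}. The only point to double-check is the bookkeeping of the separability hypothesis: the hypothesis ``$Y$ separable'' of Corollary \ref{fin} is satisfied because $Y = X$ is assumed separable in the statement, which is also precisely the hypothesis under which RNP of $X^*$ is the relevant (and classically studied) notion.
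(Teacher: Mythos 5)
Your proposal is correct and is essentially the paper's own argument: the paper deduces this corollary by taking $X=Y^*$ in Corollary \ref{fin}, using implicitly (as announced in the introduction) that RNP and $w^*$-PCP coincide for the convex $w^*$-compact set $B_{X^*}$. Your extra remark that $w^*$-PCP passes to subspaces just makes the trivial direction explicit; nothing differs in substance.
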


\bigskip


\begin{thebibliography}{999999}


\bibitem{B} J. Bourgain. {\it Dentability and finite-dimensional
decompositions.} Studia Math. 67 (1980), 135-148.




\bibitem{bou} R. D. Bourgin. {\it Geometric Aspects of Convex Sets
with the Radon-Nikod{\'y}m Property.} Lecture Notes in Math. 993.
Springer-Verlag, Berlin 1983.


\bibitem{DF} S. Dutta, V. P. Fonf. {\it On tree characterizations of $G_{\delta}$-embeddings and some Banach
spaces.} Israel J. Math. 167 (2008), 27-48.

\bibitem{GM}
N. Ghoussoub, B. Maurey. $G_{\delta}-${\it embeddings in Hilbert
spaces}. J. Funct. Anal. 61 (1985), 72-97.

\bibitem{GL} G. L{\'o}pez-P{\'e}rez. {\it Banach spaces with many boundedly complete basic sequences failing
PCP}. J. Funct. Anal. 259 (2010), 2139-2146.


\bibitem{LS} G. L{\'o}pez-P{\'e}rez, J. Soler-Arias. {\it A tree
characterization of the point of continuity property in general
Banach spaces.} Proc. Amer. MAth. Soc. 140 (2012), 4243-4245.



\bibitem{LZ} J. Lindenstrauss, L. Tzafriri. {\it Classical Banach
Spaces I.} Springer Verlag. Berlin 1977.



\bibitem{R1} H. P. Rosenthal. {\it Boundedly complete weak-Cauchy basic
sequences in Banach spaces with PCP}. J. Funct. Anal. 253 (2007),
772-781.

\bibitem{R2} H. P. Rosenthal. {\it Weak$^*$-Polish Banach Spaces}.
J. Func. Anal. 76 (1988), 267-316.

\bibitem{R3} H. P. Rosenthal. {\it A subsequence principle characterizing
Banach spaces containing $c_{0}$.} Bulletin of the Amer. Math.
Soc. 30 (2) (1994), 227-233.

\bibitem{S} I. Singer.  {\it Bases in Banach spaces I.}
Springer-Verlag. Berlin 1970.





\end{thebibliography}
\end{document}